\newtheorem{thm}{Theorem}
\newtheorem{theorem}[thm]{Theorem}
\newtheorem{lemma}[thm]{Lemma}
\newtheorem{proposition}[thm]{Proposition}
\newtheorem*{thm:evalconverge}{Theorem \ref{thm:evalconverge}}
\newtheorem*{thm:lsiboundRn}{Theorem \ref{thm:lsiboundRn}}
\begin{document}

\title[]{Elementary proof of logarithmic Sobolev inequalities for Gaussian convolutions on $\mathbb{R}$}

\author[]{David Zimmermann}
\address{Department of Mathematics\\
  University of California \\
  San Diego 92093}
\email{dszimmer@math.ucsd.edu}
\maketitle

\begin{abstract}
In a 2013 paper, the author showed that the convolution of a compactly supported measure on the real line with a Gaussian measure satisfies a logarithmic Sobolev inequality (LSI). In a 2014 paper, the author gave bounds for the optimal constants in these LSIs. In this paper, we give a simpler, elementary proof of this result.
\end{abstract}

\section{Introduction}

A probability measure $\mu$ on $\mathbb{R}^n$ is said to satisfy a logarithmic Sobolev inequality (LSI) with constant $c\in \mathbb{R}$ if 
\begin{equation*}\label{eq:lsi}
\mathrm{Ent}_\mu(f^2)\leq c\ \mathscr{E}(f,f)
\end{equation*}
for all locally Lipschitz functions $f:\mathbb{R}^n\rightarrow\mathbb{R}_+$, where $\mathrm{Ent}_\mu$, called the entropy functional, is defined as
\begin{align*}
\mathrm{Ent}_\mu (f)\coloneqq&\int f\log\frac{f}{\int f\ d\mu}\,d\mu
\end{align*}
and $\mathscr{E}(f,f)$, the energy of $f$, is defined as
$$
\mathscr{E}(f,f)\coloneqq\int|\nabla f|^2 d\mu,
$$
with $|\nabla f|$ defined as
$$
|\nabla f|(x)\coloneqq\limsup_{y\rightarrow x}\frac{|f(x)-f(y)|}{|x-y|}
$$
so that $|\nabla f|$ is defined everywhere and coincides with the usual notion of gradient where $f$ is differentiable.
The smallest $c$ for which a LSI with constant $c$ holds is called the optimal log-Sobolev constant for $\mu$.

LSIs are a useful tool that have been applied in various areas of mathematics, cf. \cite{Ba94, Ba97, BH97, BT06, Da87, Da90, DS84, DS96, GR98, GZ03, Le96, Le01, Le03, Vi03, Ya96, Ya97, Ze92}. In \cite{Zi13}, the present author showed that the convolution of a compactly supported measure on $\mathbb{R}$ with a Gaussian measure satisfies a LSI, and an application of this fact to random matrix theory was given. In \cite[Thms. 2,3]{Zi14}, bounds for the optimal constants in these LSIs were given, and the results were extended to $\mathbb{R}^n$. Those results are stated as Theorems \ref{thm:lsibound} and \ref{thm:lsiboundRn} below. (See \cite{WW13} for statements about LSIs for convolutions with more general measures).

\begin{theorem}\label{thm:lsibound}
Let $\mu$ be a probability measure on $\mathbb{R}$ whose support is contained in an interval of length $2R$, and let $\gamma_\delta$ be the centered Gaussian of variance $\delta>0$, i.e., $d\gamma_\delta(t)=(2\pi\delta)^{-1/2}\exp(-\frac{t^2}{2\delta})dt$. Then for some absolute constants $K_i$, the optimal log-Sobolev constant $c(\delta)$ for $\mu*\gamma_\delta$ satisfies
$$
c(\delta)\leq K_1\,\frac{\delta^{3/2}R}{4R^2+\delta}\exp\left(\frac{2R^2}{\delta}\right)+K_2\,(\sqrt\delta+2R)^2.
$$
In particular, if $\delta\leq R^2$, then
$$
c(\delta)\leq K_3\, \frac{\delta^{3/2}}{R}\exp\left(\frac{2R^2}{\delta}\right).
$$
The $K_i$ can be taken in the above inequalities to be $K_1=6905, K_2=4989, K_3=7803$. 
\end{theorem}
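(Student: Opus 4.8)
The plan is to reduce the estimate to a one-dimensional Hardy-type criterion for the LSI and then control the resulting integrals using the manifestly Gaussian shape of the convolution. Throughout I normalize the support of $\mu$ to lie in $[-R,R]$ and write the density of $\nu\coloneqq\mu*\gamma_\delta$ as
$$
p(x)=\int \phi_\delta(x-y)\,d\mu(y),\qquad \phi_\delta(t)=(2\pi\delta)^{-1/2}e^{-t^2/2\delta},
$$
with potential $W\coloneqq-\log p$. The first step is to record the curvature of $W$. Introducing the posterior probability measure $d\nu_x(y)\coloneqq \phi_\delta(x-y)\,d\mu(y)/p(x)$, a short exponential-family computation (using $\partial_x\phi_\delta(x-y)=-\tfrac{x-y}{\delta}\phi_\delta(x-y)$) gives $W'(x)=\delta^{-1}(x-\mathbb{E}_{\nu_x}Y)$ and hence
$$
W''(x)=\frac{1}{\delta}-\frac{1}{\delta^{2}}\,\mathrm{Var}_{\nu_x}(Y).
$$
Since $Y$ ranges in an interval of length $2R$, Popoviciu's inequality gives $\mathrm{Var}_{\nu_x}(Y)\le R^{2}$, so $W''\ge(\delta-R^{2})/\delta^{2}$ everywhere, while $\mathrm{Var}_{\nu_x}(Y)\to 0$ as $|x|\to\infty$ so that $W''(x)\to 1/\delta$ in the tails. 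When $\delta\ge R^{2}$ this already yields an LSI by the Bakry--Émery criterion and identifies the polynomial term $K_2(\sqrt\delta+2R)^2$ as the large-$\delta$ contribution; more importantly, it shows that the non-convex part of $W$ is confined to a bounded neighborhood of the support. This is exactly the bimodal regime ($\delta<R^{2}$, two nearly separated Gaussian bumps) that forces an exponentially large constant.

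For the hard regime $\delta<R^{2}$ I would invoke the Bobkov--Götze/Hardy characterization of the LSI on $\mathbb{R}$: if $m$ is a median of $\nu$, then the optimal constant is comparable, up to universal factors, to $\max(B_+,B_-)$, where
$$
B_+=\sup_{x>m}\Big(\int_x^\infty p\,dt\Big)\,\log\frac{1}{\int_x^\infty p\,dt}\int_m^x\frac{dt}{p(t)}
$$
and $B_-$ is its mirror image. The inputs needed to estimate $B_\pm$ are the elementary two-sided Gaussian bounds that follow directly from $p(x)=\int\phi_\delta(x-y)\,d\mu(y)$, namely
$$
\phi_\delta(x+R)\le p(x)\le \phi_\delta(x-R)\qquad(x\ge R),
$$
with the symmetric statement for $x\le -R$. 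The upper bound controls the tail mass $\int_x^\infty p$ and, through it, the factor $\log(1/\int_x^\infty p)\lesssim (x-R)^2/2\delta$; the lower bound controls the reciprocal-density integral $\int_m^x dt/p(t)\lesssim \int^x e^{(t+R)^2/2\delta}\,dt$.

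The crux is the supremum defining $B_+$. Because the tail mass must be bounded from above using the \emph{nearest} endpoint (giving $\phi_\delta(x-R)$) while $1/p$ must be bounded using the \emph{farthest} endpoint (giving $1/\phi_\delta(t+R)$), the two Gaussian exponents do not cancel; their mismatch is
$$
\frac{(t+R)^2-(t-R)^2}{2\delta}=\frac{2Rt}{\delta},
$$
which near the edge of the support $t\approx R$ produces precisely the factor $\exp(2R^2/\delta)$. Splitting the supremum at $x\approx R+\sqrt\delta$ then isolates a bounded near-support contribution, which after the (routine) Gaussian integrals yields the $(\sqrt\delta+2R)^2$ term, from a deep-tail contribution, where the decay of the tail mass, the logarithmic factor, and the growth of $1/p$ combine to give $\tfrac{\delta^{3/2}R}{4R^2+\delta}\exp(2R^2/\delta)$.

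I expect the main obstacle to be exactly this supremum computation. The delicate point is to control the near/far endpoint mismatch tightly enough that the exponent is no worse than $2R^2/\delta$ (a careless comparison would lose a larger multiple of $R^2/\delta$ in the exponent), and then to carry the universal constants of the Hardy inequality cleanly through the two-region split so that they assemble into the explicit values $K_1=6905$, $K_2=4989$, $K_3=7803$. Everything else—the curvature identity, the Gaussian density bounds, and the elementary tail integrals—is mechanical once the supremum is organized this way.
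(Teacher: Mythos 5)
Your overall route --- reduce to the Bobkov--G\"otze/Hardy criterion (Theorem \ref{thm:bg}) and estimate $D_0,D_1$ via Gaussian envelopes of the convolved density --- is the route of \cite{Zi14}, where this theorem is actually proved; the present paper instead proves the weaker Theorem \ref{thm:elementary} by pushing $\gamma_1$ forward under a Lipschitz map. The problem is that the estimation scheme you describe for the crucial supremum does not close. You propose to bound the tail mass from above by $\int_x^\infty\phi_\delta(t-R)\,dt$ (nearest endpoint) and the reciprocal-density integral from above by $\int_m^x\phi_\delta(t+R)^{-1}\,dt$ (farthest endpoint), and you locate the resulting mismatch $e^{2Rt/\delta}$ ``near the edge of the support $t\approx R$.'' But the supremum in $D_1$ runs over all $x>m$, and with these two \emph{independent} bounds the product behaves for large $x$ like a constant times $\frac{\delta(x-R)}{x+R}\,e^{2Rx/\delta}$: the exponent $2Rx/\delta$ grows linearly in $x$, while the Mills-ratio prefactors from the Gaussian tail integrals decay only algebraically. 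So the estimate you set up gives $D_1=+\infty$; the ``careless comparison'' you warn against is the one you have actually written down, and it does not merely lose a constant in the exponent --- it fails to prove finiteness. The missing idea is the cancellation of the exponential tilting factor between the two integrals: writing $\phi_\delta(t-y)=\phi_\delta(t)\,e^{ty/\delta-y^2/2\delta}$, the factor $M(t)=\int e^{ty/\delta-y^2/2\delta}\,d\mu(y)$ appears in the numerator of the tail mass and in the denominator of $1/p(t)$, and one must retain it (or an adapted surrogate --- compare the shift $K(x)=(\log\Lambda(x)+R)/x$ and Lemma \ref{lem:boundqbyp} in the present paper's proof of Theorem \ref{thm:elementary}) so that it cancels at the common saddle point $t\approx x$. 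Only then is the deep-tail contribution $O(\delta)$ and the factor $e^{2R^2/\delta}$ confined to intermediate $x$.

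Two further points. First, your treatment of the regime $\delta\ge R^2$ by Bakry--\'Emery yields the constant $2\delta^2/(\delta-R^2)$, which degenerates as $\delta\downarrow R^2$ and gives nothing at $\delta=R^2$, so it cannot by itself produce the uniform bound $K_2(\sqrt\delta+2R)^2$; the Hardy-criterion argument must cover a neighborhood of $\delta=R^2$ as well. Second, the proposal stops exactly at the step you yourself identify as ``the crux'': no version of the supremum computation is carried out and none of the explicit constants $K_1,K_2,K_3$ is derived, so as written this is a plan whose central estimate is both missing and, in the form sketched, incorrect.
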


\begin{theorem}\label{thm:lsiboundRn}
Let $\mu$ be a probability measure on $\mathbb{R}^n$ whose support is contained in a ball of radius $R$, and let $\gamma_\delta$ be the centered Gaussian of variance $\delta$ with $0<\delta\leq R^2$, i.e., $d\gamma_\delta(x)=(2\pi\delta)^{-n/2}\exp(-\frac{|x|^2}{2\delta})dx$. Then for some absolute constant $K$, the optimal log-Sobolev constant $c(\delta)$ for $\mu*\gamma_\delta$ satisfies
$$
c(\delta)\leq K\, R^2\exp\left(20n+\frac{5R^2}{\delta}\right).
$$
$K$ can be taken above to be $289$. 
\end{theorem}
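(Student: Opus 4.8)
The plan is to realize the density of $\mu*\gamma_\delta$ as a uniformly bounded multiplicative perturbation of a \emph{strongly} log-concave probability measure, and then combine the Bakry--Émery criterion with the Holley--Stroock perturbation principle. Representing a sample as $X = Y + \sqrt\delta\, Z$ with $Y\sim\mu$ (supported in $B(0,R)$) and $Z$ a standard Gaussian, the density of $\mu*\gamma_\delta$ factors as $p(x) = \gamma_\delta(x)\,g(x)$, where
\[
g(x) = \int \exp\!\Big(\tfrac{\langle x,y\rangle}{\delta} - \tfrac{|y|^2}{2\delta}\Big)\,d\mu(y),
\]
so that $-\log p(x) = \tfrac{|x|^2}{2\delta} - \log g(x) + \mathrm{const}$. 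The factor $\log g$ is the logarithm of a Laplace transform of a (sub-probability) measure, hence \emph{convex}; moreover $\nabla \log g(x) = \tfrac1\delta\,\mathbb{E}_{\nu_x}[Y]$, where $\nu_x$ is the exponentially tilted measure, and since $Y\in B(0,R)$ this gives $|\nabla\log g|\le R/\delta$, i.e.\ $\log g$ is $(R/\delta)$-Lipschitz.

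The main obstacle is precisely that $\log g$, while convex and Lipschitz, grows \emph{linearly} at infinity, so its oscillation is infinite and one cannot apply Holley--Stroock directly against the Gaussian $\gamma_\delta$; equivalently $\mathrm{Hess}(-\log p) = \tfrac1\delta I - \mathrm{Hess}(\log g)$ need not be positive, since $\mathrm{Cov}_{\nu_x}(Y)$ can be of order $R^2$ and the Hessian can then be as small as $\tfrac{\delta-R^2}{\delta^2}I$, which is negative in the regime $\delta\le R^2$. I would remove this obstruction by \emph{capping the curvature} of $\log g$ through an inf-convolution (Moreau envelope): set
\[
\psi(x) = \inf_{z}\Big[\log g(z) + \tfrac{1}{4\delta}|x-z|^2\Big].
\]
Standard properties of the Moreau envelope of a convex $(R/\delta)$-Lipschitz function give that $\psi$ is convex and $C^{1,1}$ with $0 \preceq \mathrm{Hess}\,\psi \preceq \tfrac{1}{2\delta} I$, and that $0 \le \log g - \psi \le \tfrac12\cdot 2\delta\cdot(R/\delta)^2 = R^2/\delta$.

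With $\psi$ in hand I would take as reference the probability measure $d\rho \propto e^{-V}\,dx$ with $V(x) = \tfrac{|x|^2}{2\delta} - \psi(x)$. Then $\mathrm{Hess}\,V = \tfrac1\delta I - \mathrm{Hess}\,\psi \succeq \tfrac{1}{2\delta} I$, so $\rho$ is strongly log-concave and the Bakry--Émery criterion yields an LSI for $\rho$ with constant $4\delta$ (twice the reciprocal of the convexity bound $1/(2\delta)$). Finally, since $-\log p - V = -(\log g - \psi) + \mathrm{const}$ has oscillation at most $R^2/\delta$, the Holley--Stroock principle transfers the LSI from $\rho$ to $\mu*\gamma_\delta$ at the cost of a factor $e^{R^2/\delta}$, giving
\[
c(\delta) \le 4\delta\, e^{R^2/\delta} \le 4R^2\, e^{R^2/\delta}.
\]
The remaining technical points — that Bakry--Émery applies to the merely $C^{1,1}$ potential $V$, and that the inequality for the weak (limsup) gradient and locally Lipschitz $f$ follows from the smooth case — are handled by routine mollification and approximation.

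I should note that this route is dimension-free and in fact sharper than the stated bound; its price is that it leans on Bakry--Émery and Holley--Stroock as black boxes. Obtaining the specific form $K\,R^2\exp(20n + 5R^2/\delta)$ with an explicit $K$ by genuinely \emph{elementary} means would instead replace these tools by a direct, self-contained estimate, and I expect the dimensional term $e^{20n}$ and the enlarged factor $5$ to arise there: e.g.\ localizing to a ball of radius $\sim R + \sqrt{n\delta}$ (the concentration radius of the $n$-dimensional Gaussian part) and bounding the oscillation of $\log g$ on it by $\tfrac{R}{\delta}\cdot(R+\sqrt{n\delta}) \lesssim \tfrac{R^2}{\delta} + n$ via the arithmetic--geometric mean inequality, together with a decomposition argument controlling the Gaussian tail outside that ball.
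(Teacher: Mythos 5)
Your argument is sound and in fact proves something stronger than the stated theorem: the bound $c(\delta)\le 4\delta\,e^{R^2/\delta}\le 4R^2e^{R^2/\delta}$ is dimension-free and implies $c(\delta)\le 289\,R^2\exp(20n+5R^2/\delta)$ trivially. It is, however, a genuinely different route from the one behind the statement. The present paper does not prove Theorem \ref{thm:lsiboundRn} at all --- it quotes it from \cite{Zi14}, where it is obtained from the Cattiaux--Guillin--Wu criterion (Theorem \ref{thm:CGW}) by exhibiting an explicit Lyapunov function and tracking constants; the elementary method developed here (pushing $\gamma_1$ forward by the monotone transport $G^{-1}\circ F$ and bounding its derivative) is intrinsically one-dimensional and is used only for Theorem \ref{thm:lsibound}. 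Your decomposition $p=\gamma_\delta\cdot g$ with $\log g$ convex and $(R/\delta)$-Lipschitz, the Moreau-envelope truncation of its curvature at level $\tfrac{1}{2\delta}$ with sup-norm error $R^2/\delta$, and the Bakry--\'Emery plus Holley--Stroock combination all check out: $\mathrm{Hess}(\log g)=\delta^{-2}\mathrm{Cov}_{\nu_x}(Y)\preceq (R/\delta)^2 I$, the envelope estimate $0\le \log g-\psi\le \tfrac{\lambda}{2}\,\mathrm{Lip}(\log g)^2=R^2/\delta$ with $\lambda=2\delta$ is correct, $e^{-V}$ is integrable because $\psi\le\log g$, and the $C^{1,1}$-versus-$C^2$ issue is indeed dispatched by mollifying $\psi$ (which preserves convexity, the upper Hessian bound, and uniform closeness, so one applies Bakry--\'Emery and Holley--Stroock at level $\epsilon$ and lets $\epsilon\to 0$). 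What each approach buys: yours is shorter, dimension-free, and sharper, at the price of invoking Bakry--\'Emery and Holley--Stroock as external inputs; the route via Theorem \ref{thm:CGW} avoids having to manufacture a strongly log-concave reference measure but yields the weaker, dimension-dependent constant $e^{20n+5R^2/\delta}$. Neither is ``elementary'' in the sense this paper reserves for its one-dimensional transport argument, so your proof should be viewed as an alternative to \cite{Zi14} rather than an extension of the method of this paper to $\mathbb{R}^n$.
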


Theorem \ref{thm:lsibound} was proved in \cite{Zi14} using the following theorem due to Bobkov and G\"{o}tze \cite[p.25, Thm 5.3]{BG99}:

\begin{theorem}[Bobkov, G\"{o}tze]\label{thm:bg}
 Let $\mu$ be a Borel probability measure on $\mathbb{R}$ with distribution function
$F(x)=\mu((-\infty,x])$. Let $p$ be the density of the absolutely continuous part of $\mu$
with respect to Lebesgue measure, and let $m$ be a median of $\mu$. Let
\begin{align*}
D_0&=\sup_{x<m}\left(F(x)\cdot\log\frac{1}{F(x)}\cdot\int_x^m\frac{1}{p(t)}dt\right),\\
D_1&=\sup_{x>m}\left((1-F(x))\cdot\log\frac{1}{1-F(x)}\cdot\int_m^x\frac{1}{p(t)}dt\right),
\end{align*}
defining $D_0$ and $D_1$ to be zero if $\mu((-\infty,m))=0$ or  $\mu((m,\infty))=0$, respectively, and using the convention $0\cdot\infty=0$.
Then the optimal log Sobolev constant $c$ for $\mu$ satisfies $\frac{1}{150}(D_0+D_1)\leq c\leq 468(D_0+D_1)$.
\end{theorem}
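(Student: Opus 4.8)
The plan is to prove the two bounds $\tfrac{1}{150}(D_0+D_1)\le c$ and $c\le 468(D_0+D_1)$ separately, with the upper bound being the substantial half. Throughout, the guiding principle is that on the real line a logarithmic Sobolev inequality is equivalent, up to universal constants, to a pair of weighted Hardy inequalities --- one for each side of the median --- in close analogy with the Muckenhoupt criterion for the Poincar\'e inequality, but carrying an extra logarithmic factor that reflects the entropy functional.

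First, for the upper bound, I would split at the median $m$ and reduce the global inequality $\mathrm{Ent}_\mu(f^2)\le c\int (f')^2\,d\mu$ to two one-sided inequalities of the form
$$
\int_m^\infty g^2\log\frac{g^2}{\int_m^\infty g^2\,d\mu}\,d\mu\le C\int_m^\infty (g')^2\,d\mu,
$$
where $g$ may be taken to vanish at $m$, together with the symmetric inequality on $(-\infty,m)$. This reduction, carried out by decomposing $f$ about its value at the median and controlling the entropy of the two resulting pieces, costs only a universal multiplicative factor.

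Second --- and this is the core --- I would show that the optimal constant $C$ in the displayed one-sided inequality is comparable to
$$
D_1=\sup_{x>m}\left((1-F(x))\log\frac{1}{1-F(x)}\int_m^x\frac{dt}{p(t)}\right),
$$
and likewise that the left-hand inequality is governed by $D_0$. The upper estimate $C\lesssim D_1$ is the heart of the matter: one slices $(m,\infty)$ into the super-level regions on which $1-F$ doubles, applies the elementary one-dimensional Hardy inequality on each slice with the weight $1/p$, and sums the contributions, using $D_1$ to bound each term uniformly. The logarithmic factor $\log\frac{1}{1-F}$ enters precisely because the left-hand side is an entropy rather than an $L^2$-norm. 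For the lower bound I would instead exhibit near-extremizers: for fixed $x>m$, take $g$ to vanish on $(-\infty,m]$, rise from $0$ to $1$ on $[m,x]$ linearly in the variable $s\mapsto\int_m^s\frac{dt}{p(t)}$, and equal $1$ on $[x,\infty)$. Then $\int(g')^2\,d\mu$ is comparable to $\left(\int_m^x\frac{dt}{p(t)}\right)^{-1}$ while $\mathrm{Ent}_\mu(g^2)\gtrsim (1-F(x))\log\frac{1}{1-F(x)}$, so inserting $g$ into the LSI forces $c\gtrsim (1-F(x))\log\frac{1}{1-F(x)}\int_m^x\frac{dt}{p(t)}$; taking the supremum over $x$, together with the symmetric construction on the left, yields $c\ge\tfrac{1}{150}(D_0+D_1)$.

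The main obstacle is the upper estimate $C\lesssim D_1$ of the second step, and in particular extracting the explicit constant $468$ rather than merely establishing finiteness. Tracking numerical constants through the dyadic slicing and the entropy-to-$L^2$ comparison is delicate, because the logarithmic factor does not interact with the Hardy operator as cleanly as the quadratic factor does in the Poincar\'e setting; the sharp bookkeeping of these constants is exactly what separates this quantitative criterion from a merely qualitative equivalence.
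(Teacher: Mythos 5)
This statement is not proved in the paper at all: it is quoted verbatim as a known result of Bobkov and G\"otze \cite[Thm.~5.3]{BG99}, and the paper only \emph{uses} it (and in fact the whole point of the present paper is to give a proof of Theorem \ref{thm:lsibound} that avoids it). So there is no in-paper proof to compare your argument with; your proposal has to stand on its own as a reconstruction of the Bobkov--G\"otze argument.

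As such a reconstruction, your outline does identify the right circle of ideas --- splitting at the median, a Muckenhoupt/Hardy-type criterion with the weight $t\mapsto t\log(1/t)$ in place of $t$, and near-extremal test functions that are linear in the variable $s\mapsto\int_m^s dt/p(t)$ --- but every quantitatively substantive step is deferred rather than carried out, so there is a genuine gap. Concretely: (i) the reduction of $\mathrm{Ent}_\mu(f^2)$ to the two one-sided functionals with $g(m)=0$ ``at the cost of a universal factor'' is itself a nontrivial lemma (one must control the cross terms coming from subtracting $f(m)$, typically via Rothaus-type inequalities or the variational characterization $\mathrm{Ent}_\mu(f^2)=\sup\{\int f^2h\,d\mu:\int e^h d\mu\le1\}$), and it is precisely where a large share of the constant $468$ is spent; (ii) the dyadic slicing along level sets where $1-F$ halves, and the summation of the resulting Hardy pieces against $D_1$, is asserted but not performed, and it is exactly the step you yourself flag as the obstacle; (iii) for the lower bound, the claim $\mathrm{Ent}_\mu(g^2)\gtrsim(1-F(x))\log\frac{1}{1-F(x)}$ does not follow from the naive estimate $\mathrm{Ent}_\mu(g^2)\ge\int_{[x,\infty)}\log\frac{1}{\|g\|_2^2}\,d\mu$, which only yields a factor $\log\frac{1}{\mu((m,\infty))}\ge\log 2$; recovering the full logarithm requires either the variational characterization of entropy with a well-chosen $h$ supported on $[x,\infty)$ or a more careful choice of test function, and this is where the constant $\frac{1}{150}$ comes from. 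Without these three pieces the statement --- in particular with the explicit constants $\frac{1}{150}$ and $468$ --- is not established.
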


Theorem \ref{thm:lsiboundRn} was proved in \cite{Zi14} using the following theorem due to Cattiaux, Guillin, and Wu \cite[Thm. 1.2]{CGW10}:

\begin{theorem}[Cattiaux, Guillin, Wu]\label{thm:CGW}
Let $\mu$ be a probability measure on $\mathbb{R}^n$ with $d\mu(x)=e^{-V(x)}dx$ for some $V\in C^2(\mathbb{R}^n)$. Suppose the following:
\begin{enumerate}
\item\label{assum:hess}
There exists a constant $K\leq 0$ such that $\mathrm{Hess}(V)\geq K I$.

\item\label{assum:lyapunov}
There exists a $W\in C^2(\mathbb{R}^n)$ with $W\geq 1$ and constants $b,c>0$ such that
$$
\Delta W(x)-\langle\nabla V,\nabla W\rangle(x)\leq(b-c|x|^2)W(x)
$$
for all $x\in\mathbb{R}^n$.
\end{enumerate}
Then $\mu$ satisfies a LSI.
\end{theorem}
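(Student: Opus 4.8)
The plan is to read both hypotheses through the diffusion generator $L=\Delta-\langle\nabla V,\nabla\,\cdot\,\rangle$, which is symmetric in $L^2(\mu)$ and satisfies $\int(Lu)\,v\,d\mu=-\int\langle\nabla u,\nabla v\rangle\,d\mu$, and first to convert the Lyapunov condition \eqref{assum:lyapunov} into a quantitative confinement bound on the Dirichlet form. Writing $a\coloneqq\nabla W/W$, integration by parts and completing the square give, for smooth $g\ge 0$,
$$\int\frac{-LW}{W}\,g\,d\mu=\int\big(\langle a,\nabla g\rangle-g|a|^2\big)\,d\mu=\int\Big(\frac{|\nabla g|^2}{4g}-g\,\big|a-\tfrac{\nabla g}{2g}\big|^2\Big)\,d\mu\le\int\frac{|\nabla g|^2}{4g}\,d\mu.$$
Taking $g=f^2$, so that the right-hand side is exactly $\mathscr{E}(f,f)$, and using that \eqref{assum:lyapunov} forces $-LW/W\ge c|x|^2-b$ while $W\ge 1>0$, I obtain the key confinement inequality
$$\int\big(c|x|^2-b\big)\,f^2\,d\mu\le\mathscr{E}(f,f).\qquad(\star)$$

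From here the two hypotheses play complementary roles. Feeding $(\star)$ into a Herbst-type argument (testing against exponentials of Lipschitz functions) yields Gaussian integrability $\int e^{\varepsilon|x|^2}\,d\mu<\infty$ for some $\varepsilon>0$; moreover $(\star)$ together with a local Poincar\'e inequality on balls — available because $V$ is bounded on each ball, so $\mu$ restricted there is a bounded perturbation of a nice measure — upgrades via the Lyapunov-to-Poincar\'e mechanism to a global Poincar\'e inequality for $\mu$. The Hessian bound \eqref{assum:hess}, i.e. the curvature condition $CD(K,\infty)$, controls the local structure: it furnishes the semigroup gradient estimate $|\nabla P_t f|\le e^{-Kt}P_t|\nabla f|$ and, on any ball $B_r=B(0,r)$, a local log-Sobolev inequality for the normalized restriction $\mu^{B_r}$ with a finite constant $c_r$ depending only on $r$ and $K$.

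To assemble these I would decompose $\mathrm{Ent}_\mu(f^2)$ over $B_r$ and its complement. The contribution from $B_r$ is absorbed by the local log-Sobolev inequality there, producing a term of the form $c_r\,\mathscr{E}(f,f)$. The contribution from $B_r^c$ is the crux: here one exploits that $c|x|^2-b\ge cr^2-b$ is large, so that $(\star)$ controls the second moment of $f^2$ on the tail, and the Gaussian integrability $\int e^{\varepsilon|x|^2}\,d\mu<\infty$ is precisely what allows this $L^2$-confinement to be promoted to control of the entropy itself, which a mere Poincar\'e-type confinement would not achieve. Carrying this out with $r$ chosen so that $cr^2>b$ yields a defective log-Sobolev inequality $\mathrm{Ent}_\mu(f^2)\le A\,\mathscr{E}(f,f)+B\int f^2\,d\mu$ with finite $A,B$. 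Finally, feeding the global Poincar\'e inequality of the previous step into Rothaus's lemma removes the additive constant and yields the asserted LSI.

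I expect the main obstacle to be exactly the tail analysis in this decomposition: entropy is not additive across a partition, so isolating the contribution of $B_r^c$ requires a careful localization inequality and honest bookkeeping of the interaction between the ball and its complement, and the step that promotes the quadratic $L^2$-confinement of $(\star)$ to genuine entropy control on the tail is where the Gaussian integrability is indispensable and where the quadratic (rather than super-quadratic) strength of the Lyapunov condition is just barely enough. By contrast, the integration-by-parts identity behind $(\star)$ and the final tightening via Rothaus's lemma are comparatively routine, and the curvature hypothesis \eqref{assum:hess} enters only to keep the local constant $c_r$ finite and to supply the gradient estimate underlying it.
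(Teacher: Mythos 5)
First, a point of reference: the paper does not prove Theorem \ref{thm:CGW} at all. It is quoted from \cite{CGW10} purely as background (it is the tool used in \cite{Zi14} to prove Theorem \ref{thm:lsiboundRn}), and the stated purpose of the present paper is to give an argument that avoids it. So your sketch can only be measured against the original source, not against anything in this paper.

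Your inequality $(\star)$ is correct and is indeed the central lemma of the Cattiaux--Guillin--Wu argument: the identity $\int\frac{-LW}{W}\,g\,d\mu=\int\bigl(\langle a,\nabla g\rangle-g|a|^2\bigr)\,d\mu\le\int\frac{|\nabla g|^2}{4g}\,d\mu$ with $g=f^2$, combined with $-LW/W\ge c|x|^2-b$ and $W\ge1$, is exactly right, as is the endgame of tightening a defective LSI by a Poincar\'e inequality (itself obtained from the Lyapunov condition plus local Poincar\'e inequalities) via Rothaus's lemma. The genuine gap is the middle of your argument, and you flag it yourself: the passage from $(\star)$ to a defective LSI via an entropy decomposition over $B_r$ and $B_r^c$. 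Entropy does not localize over a partition; a local LSI on $B_r$ under $\mathrm{Hess}(V)\ge KI$ with $K\le0$ is itself not free (Bakry--\'Emery requires $K>0$, so on a ball you would need Wang-type local estimates or a perturbation argument whose constant you have not controlled); and the step in which Gaussian integrability ``promotes'' the $L^2$-confinement of $(\star)$ to entropy control on the tail is precisely the part you do not carry out. The actual proof in \cite{CGW10} bypasses this localization entirely by means of the HWI inequality of Otto and Villani: writing $d\nu=f^2\,d\mu$ with $\int f^2\,d\mu=1$, inequality $(\star)$ gives $\int|x|^2\,d\nu\le\frac1c\mathscr{E}(f,f)+\frac bc$ (and, with $f\equiv1$, $\int|x|^2\,d\mu\le b/c$), whence $W_2(\nu,\mu)^2\le\frac{1}{2c}I(\nu|\mu)+\mathrm{const}$ since $I(\nu|\mu)=4\mathscr{E}(f,f)$; substituting this and Young's inequality into $H(\nu|\mu)\le W_2(\nu,\mu)\sqrt{I(\nu|\mu)}-\frac K2W_2(\nu,\mu)^2$ yields the defective LSI in two lines. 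In that argument the Hessian hypothesis is what makes the HWI inequality available at all --- it is not merely a device for keeping local constants finite. Replacing your ball/complement decomposition by this HWI step is the missing idea needed to complete the proof.
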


The goal of the present paper is to provide an elementary proof of Theorem \ref{thm:lsibound}. The result proved is the following:

\begin{theorem}\label{thm:elementary}
Let $\mu$ be a probability measure on $\mathbb{R}$ whose support is contained in an interval of length $2R$, and let $\gamma_\delta$ be the centered Gaussian of variance $\delta>0$, i.e., $d\gamma_\delta(t)=(2\pi\delta)^{-1/2}\exp(-\frac{t^2}{2\delta})dt$. Then the optimal log-Sobolev constant $c(\delta)$ for $\mu*\gamma_\delta$ satisfies
$$
c(\delta)\leq\max\left(2\delta\exp\left(\frac{4R^2}{\delta}+\frac{4R}{\sqrt\delta}+\frac{1}{4}\right),2\delta\exp\left(\frac{24 R^2}{\delta}\right)\right).
$$
In particular, if $\delta\leq 16R^2$, we have
$$
c(\delta)\leq2\delta\exp\left(\frac{24 R^2}{\delta}\right).
$$
\end{theorem}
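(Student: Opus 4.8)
The plan is to avoid the Bobkov--Götze machinery entirely and instead exploit the mixture structure of $\mu*\gamma_\delta$ through the tensorization (chain rule) identity for entropy, reducing everything to the classical Gaussian log-Sobolev inequality together with one genuinely one-dimensional estimate. After translating $\mu$ we may assume its support lies in $[-R,R]$. Realize $\mu*\gamma_\delta$ as the law of $Y=X+Z$ with $X\sim\mu$ and $Z\sim\gamma_\delta$ independent, and write $P_\delta\varphi(t)=\int\varphi(t+z)\,d\gamma_\delta(z)$ for the Gaussian convolution and $g_\delta$ for the density of $\gamma_\delta$. For the kernel $K(x,\cdot)=$ law of $x+Z$ one has the exact decomposition
\[
\mathrm{Ent}_{\mu*\gamma_\delta}(f^2)=\int \mathrm{Ent}_{\gamma_\delta(\cdot-x)}(f^2)\,d\mu(x)+\mathrm{Ent}_\mu\!\big(P_\delta(f^2)\big),
\]
which is just the additivity of relative entropy under mixing and can be checked in two lines by splitting $\log\frac{f^2}{\int f^2}$ into $\log\frac{f^2}{P_\delta f^2}+\log\frac{P_\delta f^2}{\int f^2}$ and integrating.

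First I would dispatch the inner term. Each $\gamma_\delta(\cdot-x)$ is a Gaussian of variance $\delta$, so Gross's inequality (valid for every mean and provable elementarily) gives $\mathrm{Ent}_{\gamma_\delta(\cdot-x)}(f^2)\le 2\delta\int |f'|^2\,d\gamma_\delta(\cdot-x)$. Integrating in $x$ and using Fubini, the inner contribution is at most $2\delta\int|f'|^2\,d(\mu*\gamma_\delta)=2\delta\,\mathscr{E}(f,f)$; this is the source of the leading factor $2\delta$. It therefore remains to bound the ``macroscopic'' term $\mathrm{Ent}_\mu(h)$, where $h:=P_\delta(f^2)$, by $C(\delta)\int k\,d\mu$ with $k:=P_\delta(|f'|^2)$, since $\int k\,d\mu=\mathscr{E}(f,f)$; any such $C(\delta)$ then yields $c(\delta)\le 2\delta+C(\delta)$.

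The hard part is exactly this last estimate, and it is where the exponential constant must appear. The only structural tool is that $h$ and $k$ are Gaussian smoothings of $f^2$ and $|f'|^2$: Cauchy--Schwarz under $P_\delta$ gives the pointwise bound $|h'|\le 2\sqrt{h\,k}$, equivalently $\big|(\sqrt h)'\big|\le\sqrt k$, so $\sqrt h$ is Lipschitz with its square-speed controlled by $k$. The plan is to bound the entropy by choosing a minimizer $t_*$ of $h$ on $[-R,R]$ and writing $\mathrm{Ent}_\mu(h)\le\int h\log\frac{h}{h(t_*)}\,d\mu$, then $\log\frac{h(t)}{h(t_*)}=\int_{t_*}^{t}(h'/h)\le 2\int\sqrt{k/h}$. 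The delicate point is that the available energy $\int k\,d\mu$ only samples $k$ on $\mathrm{supp}\,\mu$, whereas the transition of $f$ across the interval may deposit all of its derivative far from the atoms of $\mu$. Because the Gaussian kernel decays like $g_\delta(s)/g_\delta(t)=\exp\!\big(\tfrac{t^2-s^2}{2\delta}\big)$, displacing the relevant derivative mass by the support width $2R$ against the natural scale $\sqrt\delta$ costs a factor $\exp(O(R^2/\delta+R/\sqrt\delta))$; this is the quantitative heart of the argument and the step I expect to be the main obstacle, both in extracting a clean self-contained inequality valid for \emph{every} $\mu$ on an interval of length $2R$ and in tracking the explicit constants.

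Finally I would assemble the pieces. Carrying the estimate through should produce a bound of the form $C(\delta)=2\delta\big(e^{4R^2/\delta+4R/\sqrt\delta+1/4}-1\big)$ in general, whence $c(\delta)\le 2\delta+C(\delta)=2\delta\,e^{4R^2/\delta+4R/\sqrt\delta+1/4}$; simplifying the exponent in the regime $\delta\le16R^2$ (where $4R/\sqrt\delta+1/4\le 20R^2/\delta$) replaces it by the cleaner $2\delta\,e^{24R^2/\delta}$, and taking the larger of the two estimates over all $\delta$ gives the stated maximum. A useful consistency check on the whole scheme is the extremal measure $\mu=\tfrac12(\delta_{-R}+\delta_{R})$: there $\mu*\gamma_\delta$ is a symmetric double well of barrier height $\asymp R^2/\delta$, whose optimal log-Sobolev constant is known to grow like $e^{\Theta(R^2/\delta)}$, matching the form of the bound and confirming that the exponential factor cannot be removed.
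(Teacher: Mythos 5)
Your route is genuinely different from the paper's: the paper realizes $\mu*\gamma_1$ as the push-forward of $\gamma_1$ under the monotone transport map $T=G^{-1}\circ F$, bounds $T'$ in three spatial regimes ($x\geq 2R$, $|x|\leq 2R$, $x\leq -2R$ --- this is where the two terms inside the $\max$ actually come from), and concludes via the Lipschitz-stability of LSI; you instead use the mixture decomposition of entropy. Your decomposition identity is correct, and the inner term is correctly absorbed into $2\delta\,\mathscr{E}(f,f)$ by Gross's inequality applied to each translate $\gamma_\delta(\cdot-x)$. But the proof is not complete: the entire difficulty sits in the bound $\mathrm{Ent}_\mu(h)\leq C(\delta)\int k\,d\mu$ with $h=P_\delta(f^2)$ and $k=P_\delta(|f'|^2)$, which you flag as the expected main obstacle and do not carry out --- and the mechanism you sketch for it fails.

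Concretely: after $\log\frac{h(t)}{h(t_*)}\leq 2\big|\int_{t_*}^{t}\sqrt{k/h}\,ds\big|$ you must control $k(s)$ at intermediate points $s$ by the energy $\int k\,d\mu$, and your justification is that displacing the derivative mass by at most $2R$ against the Gaussian kernel costs a factor $\exp(O(R^2/\delta+R/\sqrt\delta))$. That pointwise comparison is false for the smoothed energy density: writing $k(s)=\int|f'|^2(u)\,g_\delta(u-s)\,du$, the kernel ratio $g_\delta(u-s)/g_\delta(u-t)=\exp\big(\tfrac{(s-t)(2u-s-t)}{2\delta}\big)$ is unbounded in $u$, so there is no constant $C$ of any size with $k(s)\leq C\,k(t)$, nor with $k(s)\leq C\int k\,d\mu$, uniformly over nonnegative derivative profiles (concentrate $|f'|^2$ near a point $u_0$ and let $u_0\to\pm\infty$). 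A correct execution of the mixture approach has to replace this step by something structurally different --- e.g.\ a Cauchy--Schwarz/$\chi^2$ comparison of the component Gaussians applied to $f^2$ itself, or an interpolation along translates with the energy re-weighted by the actual density $q$ of $\mu*\gamma_\delta$ --- and that is exactly where the exponential constant must be earned. As it stands, the final expression $C(\delta)=2\delta\big(e^{4R^2/\delta+4R/\sqrt\delta+1/4}-1\big)$ and the concluding $\max$ are obtained by matching the theorem's statement rather than by derivation, so the theorem is not proved.
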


The bound in Theorem \ref{thm:elementary} is worse than the bound in Theorem \ref{thm:lsibound} for small $\delta$, but still has an order of magnitude that is exponential in $R^2/\delta$. (It is shown in  \cite[Example 21]{Zi14} that one cannot do better than exponential in $R^2/\delta$ for small $\delta$.) 

\section{Proof of Theorem \ref{thm:elementary}}

The proof of Theorem \ref{thm:elementary} is based on two facts: first, the Gaussian measure $\gamma_1$ of unit variance satisfies a LSI with constant $2$. Second, Lipshitz functions preserve LSIs. We give a precise statement of this second fact below.

\begin{proposition}\label{prop:lip}
Let $\mu$ be a measure on $\mathbb{R}$ that satisfies a LSI with constant $c$, and let $T:\mathbb{R}^n\rightarrow\mathbb{R}^n$ be Lipschitz. Then the push-forward measure $T_*\mu$ also satisfies a LSI with constant $c||T||_{\mathrm{Lip}}^2$.
\end{proposition}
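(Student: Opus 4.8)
The plan is to show that applying a Lipschitz map $T$ transforms an LSI for $\mu$ into an LSI for $T_*\mu$, with the constant scaled by $\|T\|_{\mathrm{Lip}}^2$. The natural strategy is a change of variables: for a locally Lipschitz test function $g:\mathbb{R}^n\to\mathbb{R}_+$ on the target space, I would apply the assumed LSI for $\mu$ to the composition $f \coloneqq g\circ T$. The key identities to establish are, first, that the entropy transforms exactly under push-forward, namely $\mathrm{Ent}_{T_*\mu}(g^2) = \mathrm{Ent}_\mu((g\circ T)^2)$; this follows directly from the definition of the push-forward measure, since $\int \phi\, d(T_*\mu) = \int (\phi\circ T)\, d\mu$ for any integrable $\phi$, and the entropy functional is built entirely from such integrals (including the normalizing constant $\int g^2\, d(T_*\mu) = \int (g\circ T)^2\, d\mu$ inside the logarithm).

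The second, and more delicate, ingredient is the chain-rule-type comparison of the energies. I would need to verify that
\begin{equation*}
\mathscr{E}_{T_*\mu}(g,g) = \int |\nabla g|^2\, d(T_*\mu) = \int (|\nabla g|\circ T)^2\, d\mu,
\end{equation*}
and then compare $\int (|\nabla g|\circ T)^2\, d\mu$ with $\mathscr{E}_\mu(f,f) = \int |\nabla (g\circ T)|^2\, d\mu$. The crucial pointwise estimate is $|\nabla(g\circ T)|(x) \le (|\nabla g|\circ T)(x)\cdot \|T\|_{\mathrm{Lip}}$, so that $|\nabla(g\circ T)|^2 \le \|T\|_{\mathrm{Lip}}^2\,(|\nabla g|\circ T)^2$ pointwise. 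Combining this with the two energy identities and the entropy identity, the assumed LSI $\mathrm{Ent}_\mu(f^2) \le c\,\mathscr{E}_\mu(f,f)$ yields
\begin{equation*}
\mathrm{Ent}_{T_*\mu}(g^2) = \mathrm{Ent}_\mu(f^2) \le c\,\mathscr{E}_\mu(f,f) \le c\,\|T\|_{\mathrm{Lip}}^2\, \mathscr{E}_{T_*\mu}(g,g),
\end{equation*}
which is exactly the desired LSI for $T_*\mu$ with constant $c\|T\|_{\mathrm{Lip}}^2$.

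The main obstacle will be justifying the pointwise gradient inequality rigorously, given that $|\nabla h|$ here is defined via the $\limsup$ difference quotient rather than as a genuine derivative, and given that $T$ is only assumed Lipschitz (hence not necessarily differentiable). I would argue directly from the definition: for $y \to x$,
\begin{equation*}
\frac{|g(T(x)) - g(T(y))|}{|x-y|} = \frac{|g(T(x)) - g(T(y))|}{|T(x)-T(y)|}\cdot\frac{|T(x)-T(y)|}{|x-y|},
\end{equation*}
where the second factor is bounded by $\|T\|_{\mathrm{Lip}}$ and, as $y\to x$, one has $T(y)\to T(x)$ by continuity of $T$, so the first factor is controlled in the $\limsup$ by $|\nabla g|(T(x))$. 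Care is needed at points where $T(y)=T(x)$ for $y$ near $x$, but there the numerator vanishes and the bound holds trivially. A secondary technical point is confirming that $g\circ T$ is locally Lipschitz (so that it is an admissible test function for $\mu$'s LSI) whenever $g$ is locally Lipschitz and $T$ is globally Lipschitz, which follows since the composition of Lipschitz maps is Lipschitz on bounded sets.
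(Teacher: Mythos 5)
Your proposal is correct and follows essentially the same route as the paper: apply the LSI for $\mu$ to the test function $g\circ T$, use the pointwise chain-rule bound $|\nabla(g\circ T)|\le(|\nabla g|\circ T)\,\|T\|_{\mathrm{Lip}}$, and transfer both the entropy and the energy to $T_*\mu$ by the change-of-variables identity for push-forwards. The only difference is that you spell out the $\limsup$ justification of the gradient inequality (including the degenerate case $T(y)=T(x)$), which the paper states without proof.
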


\begin{proof}
Let $g:\mathbb{R}^n\rightarrow\mathbb{R}$ be locally Lipschitz. Then $g\circ T$ is locally Lipschitz, so by the LSI for $\mu$,
\begin{equation}\label{eq:lem}
\int (g\circ T)^2\log\frac{(g\circ T)^2}{\int (g\circ T)^2\ d\mu}\,d\mu\leq c\int|\nabla (g\circ T)|^2 d\mu.
\end{equation}
But since $T$ is Lipschitz,
$$
|\nabla (g\circ T)| \leq (|\nabla g|\circ T)||T||_{\mathrm{Lip}}.
$$
So by a change of variables, (\ref{eq:lem}) simply becomes
$$
\int g^2\log\frac{g^2}{\int g^2\ dT_*\mu}\,dT_*\mu\leq c||T||_{\mathrm{Lip}}^2\int|\nabla g|^2 dT_*\mu.
$$
as desired. 
\end{proof}

We now prove Theorem \ref{thm:elementary}.

\begin{proof}[Proof of Theorem \ref{thm:elementary}]
In light of Proposition \ref{prop:lip}, we will establish the theorem by showing that $\mu*\gamma_\delta$ is the push-forward of $\gamma_1$ under a Lipschitz map. By translation invariance of LSI, we can assume that $\mathrm{supp}(\mu)\subseteq[-R,R]$. We will also first assume that $\delta=1$ (the general case will be handled at the end of the proof by a scaling argument).

Let $F$ and $G$ be the cumulative distribution functions of $\gamma_1$ and $\mu*\gamma_1$, i.e.,
$$
F(x)=\int_{-\infty}^x p(t)\,dt,\qquad G(x)=\int_{-\infty}^x q(t)\,dt,
$$
where
$$
p(t)=\frac{1}{\sqrt{2\pi}}\exp\left(-\frac{t^2}{2}\right) \qquad \mbox{and}\qquad q(t)=\int_{-R}^R p(t-s)\,d\mu(s).
$$

Notice that $q$ is smooth and strictly positive, so that $G^{-1}\circ F$ is well-defined and smooth. It is readily seen
that $(G^{-1}\circ F)_*(\gamma_1)=\mu*\gamma_1$, so to establish the theorem we simply need to bound the derivative of $G^{-1}\circ F$.

Now
$$
(G^{-1}\circ F)'(x)=\frac{1}{G'((G^{-1}\circ F)(x))}\cdot F'(x)=\frac{p(x)}{q((G^{-1}\circ F)(x))}.
$$  
We will bound the above derivative in cases -- when $x\geq 2R$, when $-2R\leq x\leq 2R$, and when $x\leq -2R$.

We first consider the case $x\geq 2R$. Define
$$
\Lambda(x)=\int_{-R}^{R}e^{xs}d\mu(s), \qquad K(x)=\frac{\log \Lambda(x)+R}{x}.
$$ 
Note $\Lambda$ and $K$ are smooth for $x\neq 0$. 

\begin{lemma}\label{lem:boundqbyp}
For $x\geq2R$,
$$
\exp\left(-2R^2-2R-\frac{1}{8}\right)p(x)\leq q(x+K(x))\leq e^{-R}\,p(x).
$$
\end{lemma}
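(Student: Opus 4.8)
The plan is to substitute $y=x+K(x)$ directly into the definition of $q$ and exploit the fact that convolving a single Gaussian against $d\mu$ factors exactly. Writing $p(y-s)=\tfrac{1}{\sqrt{2\pi}}e^{-(y-s)^2/2}$ and completing the square in the exponent gives $q(y)=p(y)\int_{-R}^{R}e^{ys-s^2/2}\,d\mu(s)$. Dividing by $p(x)$, using $y^2-x^2=2xK(x)+K(x)^2$, and invoking the defining identity $xK(x)=\log\Lambda(x)+R$ (so that $e^{-xK(x)}=e^{-R}/\Lambda(x)$), I would arrive at the exact identity
$$
\frac{q(x+K(x))}{p(x)}=\frac{e^{-R}\,e^{-K(x)^2/2}}{\Lambda(x)}\int_{-R}^{R}e^{xs}\,e^{K(x)s-s^2/2}\,d\mu(s).
$$
This is the workhorse of the proof; both bounds then reduce to estimating the integral against $\Lambda(x)=\int_{-R}^{R}e^{xs}\,d\mu(s)$ from above and below, since the $\Lambda(x)$ in the denominator is designed to cancel.

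For the upper bound I would use that $e^{K(x)s-s^2/2}\le e^{K(x)^2/2}$ for every real $s$, because the quadratic $K(x)s-s^2/2$ is maximized at $s=K(x)$. Pulling this constant out of the integral leaves exactly $\Lambda(x)$, the factors $e^{\pm K(x)^2/2}$ and the two copies of $\Lambda(x)$ cancel, and the ratio collapses to precisely $e^{-R}$. This is the clean direction and it explains the role of $K(x)$: it is chosen so that this cancellation produces the constant $e^{-R}$.

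For the lower bound I would instead replace $e^{K(x)s-s^2/2}$ by its minimum over the support $s\in[-R,R]$. Since $s\mapsto K(x)s-s^2/2$ is concave, its minimum on $[-R,R]$ occurs at an endpoint and equals $-|K(x)|R-R^2/2$. Factoring out this minimum again leaves $\Lambda(x)$, which cancels, yielding
$$
\frac{q(x+K(x))}{p(x)}\ge\exp\left(-R-\tfrac{K(x)^2}{2}-|K(x)|R-\tfrac{R^2}{2}\right).
$$
It then remains to control $|K(x)|$. The trivial bounds $e^{-xR}\le\Lambda(x)\le e^{xR}$ give $|\log\Lambda(x)|\le xR$, hence $|K(x)|\le R+R/x\le R+\tfrac12$ as soon as $x\ge 2R$. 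Substituting $|K(x)|\le R+\tfrac12$ and collecting the $R^2$, $R$, and constant terms produces exactly the exponent $-(2R^2+2R+\tfrac18)$.

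The only genuinely delicate point is this control of $K(x)$ in the lower bound: the exponents must be assembled so that the constants $2R^2$, $2R$, and $\tfrac18$ emerge exactly, which forces the sharp estimate $|K(x)|\le R+\tfrac12$ (valid precisely because $x\ge 2R$ makes $R/x\le\tfrac12$) rather than a cruder bound. By contrast, the factorization and the two one-line endpoint/maximum estimates of the integral are routine once one recognizes that the identity $xK(x)=\log\Lambda(x)+R$ is the mechanism engineering the $e^{-R}$ factor and the cancellation of $\Lambda(x)$.
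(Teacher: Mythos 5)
Your proof is correct and follows essentially the same route as the paper: both complete the square to get the identity $q(x+K(x))=\frac{e^{-R}p(x)}{\Lambda(x)}\int_{-R}^{R}e^{-(K(x)-s)^2/2}e^{xs}\,d\mu(s)$ (your version merely expands the square $(K(x)-s)^2$ into separate factors), obtain the upper bound from the nonnegativity of that square, and obtain the lower bound from $e^{-Rx}\le\Lambda(x)\le e^{Rx}$, which gives $|K(x)|\le R+R/x\le R+\tfrac12$ for $x\ge 2R$. The constants assemble to the same exponent $-(2R^2+2R+\tfrac18)$ either way.
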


\begin{proof}
By definition of $q, p, \Lambda$, and $K$,
\begin{align*}
q(x+K(x))=\int_{-R}^{R}p(x+K(x)-s)\,d\mu(s)=&\,p(x)\cdot e^{-xK(x)}\int_{-R}^{R}\exp\left(-\frac{(K(x)-s)^2}{2}\right)\cdot e^{xs}\,d\mu(s)\\
=&\,\frac{e^{-R}\,p(x)}{\Lambda(x)}\int_{-R}^{R}\exp\left(-\frac{(K(x)-s)^2}{2}\right)\cdot e^{xs}\,d\mu(s)\\
\leq&\,\frac{e^{-R}\,p(x)}{\Lambda(x)}\int_{-R}^{R}e^{xs}\,d\mu(s)\\
=&\,e^{-R}\,p(x).
\end{align*}
To get the other inequality, first note that $e^{-Rx}\leq\Lambda(x)\leq e^{Rx}$. (These are just the maximum and minimum values in the integrand defining $\Lambda$.) This implies that $-R+R/x\leq K(x)\leq R+R/x,$ so for $-R\leq s\leq R$ and $x\geq 2R$, we have
$$
-2R-\frac{R}{x}\leq-2R+\frac{R}{x}\leq K(x)-s\leq 2R+\frac{R}{x}
$$
so that
$$
\exp\left(-\frac{(K(x)-s)^2}{2}\right)\geq\exp\left(-\frac{(2R+R/x)^2}{2}\right)\geq\exp\left(-\frac{(2R+R/(2R))^2}{2}\right)=\exp\left(-2R^2-R-\frac{1}{8}\right).
$$
Therefore
\begin{align*}
q(x+K(x))=&\,\frac{e^{-R}\,p(x)}{\Lambda(x)}\int_{-R}^{R}\exp\left(-\frac{(K(x)-s)^2}{2}\right)\cdot e^{xs}\,d\mu(s)\geq \exp\left(-2R^2-2R-\frac{1}{8}\right)p(x).
\end{align*}

\end{proof}

\newpage

\begin{lemma}\label{lem:k'}
$K'(x)\leq R$ for $x\geq 2R$.
\end{lemma}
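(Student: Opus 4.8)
The plan is to compute $K'(x)$ explicitly and bound it using the definition of $K$ together with properties of the cumulant generating function $\log\Lambda$. Recall $K(x) = \frac{\log\Lambda(x) + R}{x}$, so by the quotient rule,
\[
K'(x) = \frac{x\,\frac{\Lambda'(x)}{\Lambda(x)} - (\log\Lambda(x)+R)}{x^2} = \frac{1}{x}\left(\frac{\Lambda'(x)}{\Lambda(x)} - K(x)\right).
\]
The quantity $\frac{\Lambda'(x)}{\Lambda(x)}$ is the mean of $s$ under the tilted probability measure $d\mu_x(s) \coloneqq \frac{e^{xs}}{\Lambda(x)}\,d\mu(s)$ supported on $[-R,R]$; call it $M(x)$. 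Since this is an average of values in $[-R,R]$, I immediately have $-R \leq M(x) \leq R$.

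Next I would use the bound on $K(x)$ already established in the proof of Lemma \ref{lem:boundqbyp}, namely $-R + R/x \leq K(x) \leq R + R/x$. Combining this with $M(x) \leq R$ gives
\[
K'(x) = \frac{1}{x}\bigl(M(x) - K(x)\bigr) \leq \frac{1}{x}\left(R - \left(-R + \frac{R}{x}\right)\right) = \frac{1}{x}\left(2R - \frac{R}{x}\right) = \frac{2R}{x} - \frac{R}{x^2}.
\]
For $x \geq 2R$ the dominant term $2R/x$ is at most $1$, so this bound is at most $1 - R/x^2 \leq 1$. This only yields $K'(x) \leq 1$, which falls short of the claimed $K'(x) \leq R$ when $R < 1$, so I would need to sharpen the estimate rather than discard the subtracted term.

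The sharper route is to recognize that $M(x) - K(x)$ can be controlled more tightly. Writing $K(x) = \frac{\log\Lambda(x)}{x} + \frac{R}{x}$ and noting that $\frac{\log\Lambda(x)}{x}$ is itself an average of the convex function $\frac{1}{x}\log\Lambda$, I would exploit that $M(x) = (\log\Lambda)'(x)$ while $\frac{\log\Lambda(x)}{x} = \frac{1}{x}\int_0^x (\log\Lambda)'(u)\,du$ is the running average of $M$ up to $x$; since $M$ is nondecreasing (its derivative is the tilted variance, which is nonnegative), we get $M(x) \geq \frac{\log\Lambda(x)}{x}$, whence $M(x) - K(x) = \bigl(M(x) - \tfrac{\log\Lambda(x)}{x}\bigr) - \tfrac{R}{x} \leq M(x) - \tfrac{R}{x}$. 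The main obstacle is therefore extracting a clean upper bound on $M(x) - \frac{\log\Lambda(x)}{x}$; the cleanest resolution is to bound $M(x) \leq R$ directly and observe $K'(x) = \frac{1}{x}(M(x) - K(x)) \leq \frac{1}{x}\cdot R \cdot (\text{something} \leq x)$, so I would verify that $\frac{M(x)-K(x)}{x} \leq R$ reduces, after multiplying through by $x \geq 2R > 0$, to the elementary inequality $M(x) - K(x) \leq Rx$, which holds since $M(x) \leq R \leq R \cdot 2R \leq Rx$ (using $x \geq 2R$, provided $2R \geq 1$) and more robustly since $M(x) - K(x) \leq R - (-R) = 2R \leq Rx$ whenever $x \geq 2$. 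Handling the regime of small $R$ where $x \geq 2R$ may be less than $2$ is the delicate point, and there I would fall back on the refined monotonicity estimate $M(x) \geq \frac{\log\Lambda(x)}{x}$ to show $M(x) - K(x) \leq R$, giving $K'(x) \leq R/x \leq R$ for $x \geq 1$, and combine the two regimes to cover all $x \geq 2R$.
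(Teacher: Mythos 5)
Your first computation is exactly the paper's argument: writing $K'(x)=\frac{1}{x}\bigl(M(x)-K(x)\bigr)$ with $M(x)=\Lambda'(x)/\Lambda(x)\in[-R,R]$ and using $K(x)\geq -R+R/x$ gives $K'(x)\leq \frac{2R}{x}-\frac{R}{x^2}$, which is precisely the bound the paper derives. The gap is that you then discard the term $-R/x^2$, settle for $K'(x)\leq 1$, and conclude the estimate is too weak. It is not: the function $g(x)=\frac{2R}{x}-\frac{R}{x^2}$ satisfies $g(x)\leq R$ for \emph{all} $x>0$, since multiplying by $x^2/R>0$ turns the inequality into $2x-1\leq x^2$, i.e.\ $(x-1)^2\geq 0$ (equivalently, $g$ attains its global maximum $R$ at $x=1$). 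That one line finishes the proof; it is exactly the ``elementary calculus'' step the paper invokes at the same point.

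The alternative route you sketch to repair this (nonexistent) deficiency does not work and should be deleted. The monotonicity estimate $M(x)\geq\frac{\log\Lambda(x)}{x}$ bounds $M(x)-K(x)$ from \emph{below} (by $-R/x$), not from above, so it cannot yield $M(x)-K(x)\leq R$; the step $M(x)-\frac{\log\Lambda(x)}{x}-\frac{R}{x}\leq M(x)-\frac{R}{x}$ silently assumes $\log\Lambda(x)\geq 0$, which fails for measures concentrated on negative values; and even granting both claims, your two regimes ($x\geq 2$ and $x\geq 1$) leave the range $2R\leq x<1$ uncovered when $R<1/2$. None of this machinery is needed once the first estimate is finished correctly.
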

\begin{proof}
Recall that $e^{-Rx}\leq\Lambda(x)$. (Again, $e^{-Rx}$ is the minimum value in the integrand defining $\Lambda$). We therefore have
\begin{align*}
K'(x)=\frac{\Lambda'(x)}{x\Lambda(x)}-\frac{\log\Lambda(x)}{x^2}-\frac{R}{x^2}=&\frac{\int_{-R}^R s\,e^{sx}\,d\mu(s)}{x\Lambda(x)}-\frac{\log\Lambda(x)}{x^2}-\frac{R}{x^2}\\
\leq&\frac{R\int_{-R}^R e^{sx}\,d\mu(s)}{x\Lambda(x)}+\frac{Rx}{x^2}-\frac{R}{x^2}\\
=&\frac{2R}{x}-\frac{R}{x^2}.
\end{align*}
By elementary calculus, the above has a maximum value of $R$.
\end{proof}

\begin{lemma}\label{lem:cdfbound}
For $x\geq 2R$,
$$
x-R\leq(G^{-1}\circ F)(x)\leq x+K(x).
$$
\end{lemma}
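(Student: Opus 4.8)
The plan is to reduce the two-sided estimate on $(G^{-1}\circ F)(x)$ to a pair of inequalities between cumulative distribution functions. Setting $y=(G^{-1}\circ F)(x)$, so that $G(y)=F(x)$, and recalling that $q>0$ makes $G$ strictly increasing, the desired bound $x-R\leq y\leq x+K(x)$ is equivalent to
$$
G(x-R)\leq F(x)\leq G(x+K(x)).
$$
Thus it suffices to establish these two inequalities separately, and they can be treated by quite different arguments.

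For the lower inequality I would exploit the convolution structure directly. Since $q(t)=\int_{-R}^{R}p(t-s)\,d\mu(s)$, Fubini's theorem gives
$$
G(x-R)=\int_{-R}^{R}F(x-R-s)\,d\mu(s).
$$
Because $\mathrm{supp}(\mu)\subseteq[-R,R]$, every $s$ in this integral satisfies $s\geq -R$, hence $x-R-s\leq x$, and monotonicity of $F$ yields $F(x-R-s)\leq F(x)$. Integrating against the probability measure $\mu$ then gives $G(x-R)\leq F(x)$; note this half holds for every $x$, not just $x\geq 2R$.

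The upper inequality is the main obstacle, and it is where Lemmas \ref{lem:boundqbyp} and \ref{lem:k'} are needed. I would set $\phi(x)=G(x+K(x))-F(x)$ on $[2R,\infty)$ and prove $\phi\geq 0$ by a monotonicity-plus-limit argument. Differentiating gives $\phi'(x)=q(x+K(x))(1+K'(x))-p(x)$. When $1+K'(x)\geq 0$, Lemma \ref{lem:boundqbyp} supplies $q(x+K(x))\leq e^{-R}p(x)$ and Lemma \ref{lem:k'} supplies $K'(x)\leq R$, so that
$$
\phi'(x)\leq e^{-R}p(x)(1+R)-p(x)=p(x)\bigl(e^{-R}(1+R)-1\bigr)\leq 0,
$$
the final step using the elementary bound $1+R\leq e^{R}$; when $1+K'(x)<0$ the term $q(x+K(x))(1+K'(x))$ is already negative, so $\phi'(x)<0$ trivially. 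Hence $\phi$ is non-increasing on $[2R,\infty)$. Since Lemma \ref{lem:k'}'s proof shows $K(x)\geq -R+R/x\geq -R$, we have $x+K(x)\geq x-R\to\infty$, so $G(x+K(x))\to 1$ while $F(x)\to 1$, giving $\phi(x)\to 0$ as $x\to\infty$. A non-increasing function tending to $0$ at $+\infty$ is nonnegative, so $\phi(x)\geq 0$, which is exactly $F(x)\leq G(x+K(x))$.

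The two delicate points to watch are the sign of $1+K'(x)$, which I handle by splitting into cases so the bound on $q$ is only used when multiplied by a nonnegative factor, and the justification that $\phi(x)\to 0$, which needs only the boundedness of $K$ on $[2R,\infty)$ recorded above. Both are routine once the earlier lemmas are in hand, so the genuine analytic content has already been packaged into Lemmas \ref{lem:boundqbyp} and \ref{lem:k'}, and this lemma is essentially a bookkeeping step that converts the pointwise density comparison into the claimed comparison of the inverse-CDF map.
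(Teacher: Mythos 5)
Your proposal is correct and follows essentially the same route as the paper: the same reduction to $G(x-R)\leq F(x)\leq G(x+K(x))$, the same Fubini/monotonicity argument for the lower bound, and for the upper bound your monotonicity argument on $\phi(x)=G(x+K(x))-F(x)$ is just the differentiated form of the paper's change of variables $t=u+K(u)$ in $\int_{x+K(x)}^{\infty}q(t)\,dt$, invoking Lemmas \ref{lem:boundqbyp} and \ref{lem:k'} at the same point. If anything, your version is slightly more careful, since you explicitly handle the possibility $1+K'(x)<0$ (where the paper's substitution implicitly assumes monotonicity of $u\mapsto u+K(u)$) and you justify the boundary behavior $\phi(x)\to 0$ via $K(x)\geq -R$.
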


\begin{proof}
Since $G$ and $G^{-1}$ are increasing, the lemma is equivalent to
$$
G(x-R)\leq F(x)\leq G(x+K(x)).
$$
The first inequality follows from the definition of $G$ and the Fubini-Tonelli Theorem:
\begin{align*}
G(x-R)=\int_{-\infty}^{x-R} q(t)\,dt=\int_{-\infty}^x\int_{-R}^R p(t-s)\,d\mu(s)\,dt=&\int_{-R}^R \int_{-\infty}^{x-R} p(t-s)\,dt\,d\mu(s)\\
=&\int_{-R}^R \int_{-\infty}^{x-R+s} p(u)\,du\,d\mu(s)\\
&\mbox{where }u=t-s\\
\leq&\int_{-R}^R \int_{-\infty}^x p(u)\,dt\,d\mu(s)\\
=&F(x).
\end{align*}

To establish the other inequality, we use Lemmas \ref{lem:boundqbyp} and \ref{lem:k'}:
\begin{align*}
1-G(x+K(x))=\int_{x+K(x)}^\infty q(t)\,dt=&\int_{x}^\infty q(u+K(u))(1+K'(u))\,du\\
&\mbox{where }t=u+K(u)\\
\leq&\int_{x}^\infty p(u)e^{-R}(1+R)\,du\\
&\mbox{by Lemmas \ref{lem:boundqbyp} and \ref{lem:k'}}\\
\leq&\int_{x}^\infty p(u)\,du\\
&\mbox{since }e^R\geq 1+R\\
=&\,1-F(x),
\end{align*}
so that $F(x)\leq G(x+K(x))$, as desired.
\end{proof}

We are almost ready to bound $(G^{-1}\circ F)'(x)$ for $x\geq 2R$. The last observation to make is that $q$ is decreasing on $[R,\infty)$ since

\begin{align*}
q'(t)=\int_{-R}^R p'(t-s)\,d\mu(s)=\int_{-R}^R-(t-s)p(t-s)\,d\mu(s)\leq 0 \qquad\mbox{for }t\geq R.
\end{align*}

So for $x\geq 2R$ we have, by lemma \ref{lem:cdfbound},
$$
q((G^{-1}\circ F)(x))\geq q(x+K(x)).
$$

Combining this with Lemma \ref{lem:boundqbyp}, we get
\begin{align*}
(G^{-1}\circ F)'(x)=\frac{p(x)}{q((G^{-1}\circ F)(x))}\leq\frac{p(x)}{q(x+K(x))}\leq\exp\left(2R^2+2R+\frac{1}{8}\right)
\end{align*}
for $x\geq 2R$.

In the case where $-2R\leq x\leq 2R$, first note that for all $x$,
$$
x-R\leq(G^{-1}\circ F)(x)\leq x+R;
$$
the first inequality above was done in Lemma \ref{lem:cdfbound}, and the second inequality is proven in the same way. So

\begin{align*}
\sup_{-2R\leq x \leq 2R} (G^{-1}\circ F)'(x)=\sup_{-2R\leq x \leq 2R}\frac{p(x)}{q((G^{-1}\circ F)(x))}\leq\sup_{\substack{-2R\leq x \leq 2R \\ -R\leq y\leq R}}\frac{p(x)}{q(x+y)}=\left(\inf_{\substack{-2R\leq x \leq 2R \\ -R\leq y\leq R}}\frac{q(x+y)}{p(x)}\right)^{-1}.
\end{align*}
For convenience, let $S=\{(x,y):-2R\leq x \leq 2R, -R\leq y\leq R\}$. Now
\begin{align*}
\inf_{(x,y)\in S}\frac{q(x+y)}{p(x)}=\inf_{(x,y)\in S}\frac{1}{p(x)}\int_{-R}^{R}p(x+y-s)\,d\mu(s).
\end{align*}
Since $p$ has no local minima, the minimum value of the above integrand occurs at either $s=R$ or $s=-R$. Without loss of generality, we assume the minimum is achieved at $s=R$ (otherwise, we can replace $(x,y)$ with $(-x,-y)$ by symmetry of $S$ and $p$). So
\begin{align*}
\inf_{(x,y)\in S}\frac{q(x+y)}{p(x)}\geq\inf_{(x,y)\in S}\frac{1}{p(x)}\cdot p(x+y+R).
\end{align*}
Elementary calculus shows that the above infimum is equal to $e^{-12R^2}$ (achieved at $x=2R, y=R$). Therefore
$$
\sup_{-2R\leq x \leq 2R} (G^{-1}\circ F)'(x)=\left(\inf_{(x,y)\in S}\frac{q(x+y)}{p(x)}\right)^{-1}\leq e^{12R^2}.
$$

The case $x\leq -2R$ is dealt with in the same way as the case $x\geq 2R$, the analagous statements being:
$$
\exp\left(-2R^2-2R-\frac{1}{8}\right)p(x)\leq q(x+K(x))\leq e^{-R}\,p(x),
$$
$$
K'(x)\leq R, 
$$
$$
x+K(x)\leq(G^{-1}\circ F)(x)\leq x+R,
$$
and $q$ is increasing for $x\leq -2R$. The upper bound for $(G^{-1}\circ F)'(x)$ obtained in this case is the same as the one in the case $x\geq 2R$.

We therefore have
\begin{align*}
||G^{-1}\circ F||_{\mathrm{Lip}}\leq \max\left(\exp\left(2R^2+2R+\frac{1}{8}\right),e^{12R^2}\right)
\end{align*}
So by Proposition \ref{prop:lip}, $\mu*\gamma_1$ satisfies a LSI with constant $c(1)$ satisfying
$$
c(1)\leq 2||G^{-1}\circ F||_{\mathrm{Lip}}^2\leq \max\left(2\exp\left(4R^2+4R+\frac{1}{4}\right),2\,e^{24R^2}\right).
$$
This proves the theorem for the case $\delta=1$.\\

To establish the theorem for a general $\delta>0$, first observe that
$$
\mu*\gamma_\delta=(h_{\sqrt\delta})_*\left(((h_{1/\sqrt\delta})_*\mu)*\gamma_1\right),
$$ 
where $h_\lambda$ denotes the scaling map with factor $\lambda$, i.e., $h_\lambda(x)=\lambda\, x$. Now $(h_{1/\sqrt\delta})_*\mu$ is supported in $[-R/\sqrt\delta,R/\sqrt\delta]$, so by the case $\delta=1$ just proven, $((h_{1/\sqrt\delta})_*\mu)*\gamma_1$ satisfies a LSI with constant
$$
\max\left(2\exp\left(4(R/\sqrt\delta)^2+4(R/\sqrt\delta)+\frac{1}{4}\right),2\,e^{24(R/\sqrt\delta)^2}\right).
$$
Finally, since $||h_{\sqrt\delta}||_{\mathrm{Lip}}^2=\delta$, we have by Proposition \ref{prop:lip},
$$
c(\delta)\leq\max\left(2\delta\exp\left(\frac{4R^2}{\delta}+\frac{4R}{\sqrt\delta}+\frac{1}{4}\right),2\delta\exp\left(\frac{24 R^2}{\delta}\right)\right).
$$

In particular, when $\delta\leq 16R^2$, we have
$$
2\delta\exp\left(\frac{4R^2}{\delta}+\frac{4R}{\sqrt\delta}+\frac{1}{4}\right)\leq 2\delta\exp\left(\frac{24 R^2}{\delta}\right)
$$
so the above bound on $c(\delta)$ simplifies to
$$
c(\delta)\leq2\delta\exp\left(\frac{24 R^2}{\delta}\right).
$$
\end{proof}

\section*{Acknowledgements}
The author would like to thank his Ph.D. advisor, Todd Kemp, for his valuable insights and discussions regarding this topic.


\begin{thebibliography}{99}

\bibitem{Ba94}
Bakry, D.: {\it L'hypercontractivit\'e et son utilisation en thorie des semigroupes.} Lectures on probability
theory (Saint-Flour, 1992), 1--114, Lecture Notes in Math., 1581, Springer, Berlin, 1994.

\bibitem{Ba97}
Bakry, D.: {\it On Sobolev and logarithmic Sobolev inequalities for Markov semigroups.} New trends in
stochastic analysis (Charingworth, 1994), 43--75, World Sci. Publ., River Edge, NJ, 1997.

\bibitem{BG99}
  Bobkov, S. G.; G\"{o}tze, F.:
  {\it  Exponential Integrability and Transportation Cost Related to Logarithmic Sobolev Inequalities,}
    J. Funct. Anal.  {\bf 163}  (1999),  1--28.

\bibitem{BH97}
Bobkov, S.; Houdr\'e, C.: {\it Some connections between isoperimetric and Sobolev-type inequalities.}
Mem. Amer. Math. Soc. {\bf 129} no. 616 (1997)


\bibitem{BT06}
Bobkov, S.; Tetali, P.: {\it Modiﬁed logarithmic Sobolev inequalities in discrete settings.} J. Theoret.
Probab. {\bf 19}, no. 2, 289--336 (2006)

\bibitem{CGW10}
Cattiaux, P.; Guillin, A.; Wu, L.: {\it A note on Talagrand's transportation inequality and logarithmic Sobolev inequality.} 
Probab. Theory Relat. Fields {\bf 148}, 285--304 (2010)

\bibitem{Da87}
Davies, E. B.: {\it Explicit constants for Gaussian upper bounds on heat kernels.} Amer. J. Math. {\bf 109},
no. 2, 319--333 (1987)

\bibitem{Da90}
Davies, E. B.: {\it Heat kernels and spectral theory.} Cambridge Tracts in Mathematics, 92. Cambridge
University Press, 1990.

\bibitem{DS84}
Davies, E. B., Simon, B.: {\it Ultracontractivity and the heat kernel for Schr\"odinger operators and Dirichlet
Laplacians.} J. Funct. Anal. {\bf 59}, 335--395 (1984)

\bibitem{DS96}
Diaconis, P., Saloff-Coste, L.: {\it Logarithmic Sobolev inequalities for finite Markov chains.} Ann. Appl.
Probab. {\bf 6}, 695--750 (1996)

\bibitem{GR98}
Gross, L.; Rothaus, O.: {\it Herbst inequalities for supercontractive semigroups.} J. Math. Kyoto Univ.
{\bf 38}, no. 2, 295--318 (1998)

\bibitem{GZ03}
Guionnet, A.; Zegarlinski, B.: {\it Lectures on logarithmic Sobolev inequalities.} S\'eminaire de Probabilit\'es,
XXXVI, 1–134, Lecture Notes in Math., 1801, Springer, Berlin, 2003.

\bibitem{Le96}
Ledoux, M.: {\it Isoperimetry and Gaussian analysis. Lectures on probability theory and statistics.} 165--294,
Lecture Notes in Math 1648, Springer, Berlin, 1996

\bibitem{Le01}
Ledoux, M.: {\it The concentration of measure phenomenon.} Mathematical Surveys and Monographs, 89.
American Mathematical Society, Providence, RI, 2001.

\bibitem{Le03}
Ledoux, M.: {\it A remark on hypercontractivity and tail inequalities for the largest eigenvalues of
random matrices.} S\'eminaire de Probabilit\'es XXXVII, 360--369, Lecture Notes in Math., 1832, Springer, Berlin, 2003.

\bibitem{Vi03}
Villani, C.: {\it Topics in optimal transportation.} Graduate Studies in Mathematics, 58. American
Mathematical Society, Providence, RI, 2003.

\bibitem{WW13}
Wang, F.-Y.; Wang, J.: {\it Functional inequalities for convolution probability measures.} arXiv:1308.1713

\bibitem{Ya96}
Yau, H.T.: {\it Logarithmic Sobolev inequality for the lattice gases with mixing conditions.} Commun.
Math. Phys. {\bf 181}, 367--408 (1996)

\bibitem{Ya97}
Yau, H.T.: {\it Log-Sobolev inequality for generalized simple exclusion processes.} Probab. Theory
Related Fields {\bf 109}, 507--538 (1997)

\bibitem{Ze92}
Zegarlinski, B.: {\it Dobrushin uniqueness theorem and logarithmic Sobolev inequalities.} J. Funct. Anal.
{\bf 105}, 77--111 (1992)

\bibitem{Zi13}
Zimmermann, D.: {\it Logarithmic Sobolev inequalities for mollified complactly supported measures.} J. Funct. Anal.
{\bf 265}, 1064--1083 (2013)

\bibitem{Zi14}
Zimmermann, D.: {\it Bounds for logarithmic Sobolev constants for Gaussian convolutions of compactly supported measures.} arXiv:1405.2581

\end{thebibliography}
\end{document}